\documentclass[11pt]{article}
\usepackage{epigamath}

\usepackage[notext]{kpfonts}
\usepackage{baskervald}

\setpapertype{A4}


\usepackage[english]{babel}

\usepackage[dvips]{graphicx}     


\title{Smoothing cones over K3 surfaces}
\titlemark{Smoothing cones over K3 surfaces}
\author{\vspace{0cm} Stephen Coughlan and Taro Sano}
\authoraddresses{
\authordata{Stephen Coughlan}{\firstname{Stephen} \lastname{Coughlan}\\
\institution{Institut f\"ur Algebraische Geometrie, Leibniz Universit\"at Hannover, 30167 Hannover, Germany \\
Current address: Mathematisches Institut, Lehrstuhl Mathematik VIII, Universit\"atsstrasse 30, 95447 Bayreuth, Germany}\\
\email{stephen.coughlan@uni-bayreuth.de}}\\
\authordata{Taro Sano}{\firstname{Taro}
\lastname{Sano}\\
\institution{Department of Mathematics, Faculty of Science, Kobe University, 1-1, Rokkodai-cho, Nada-ku, Kobe, 657-0029, Japan}\\
\email{tarosano@math.kobe-u.ac.jp}}
}
\authormark{S. Coughlan and T. Sano}
\date{\vspace{-5ex}} 
\journal{\'Epijournal de G\'eom\'etrie Alg\'ebrique} 
\acceptation{Received by the Editors on November 7, 2017, and in final form
on November 22, 2018. \\ Accepted on November 28, 2018.}



\usepackage[all]{xy}

\allowdisplaybreaks





\setcounter{MaxMatrixCols}{20}

\newcommand{\A}{\mathbb A}
\newcommand{\C}{\mathbb C}
\renewcommand{\P}{\mathbb P}

\newcommand{\ZZ}{\mathbb Z}

\newcommand{\Oh}{\mathcal O}

\newcommand{\T}{\mathcal{T}}

\DeclareMathOperator{\coker}{coker}
\DeclareMathOperator{\Cliff}{Cliff}
\DeclareMathOperator{\Ext}{Ext}
\DeclareMathOperator{\Spec}{Spec}
\DeclareMathOperator{\Proj}{Proj}
\DeclareMathOperator{\Pic}{Pic}

\DeclareMathOperator{\Hilb}{Hilb}
\DeclareMathOperator{\Def}{Def}

\DeclareMathOperator{\Art}{Art}
\DeclareMathOperator{\Sets}{Sets}

\DeclareMathOperator{\Pf}{Pf}
\DeclareMathOperator{\Sing}{Sing}

\newcounter{subsectionnum}
\numberwithin{subsectionnum}{numsection}
\newlength{\lengthtitle}
\newcommand{\newsubsection}[1]{
\settowidth{\lengthtitle}{#1}
\ifnum\lengthtitle=0\paragraph{\bfseries \thesubsectionnum.}\else\paragraph{\bfseries \thesubsectionnum. #1.}\fi
\refstepcounter{subsectionnum}}

\newtheorem{thm}[subsectionnum]{Theorem}
\newtheorem{cor}[subsectionnum]{Corollary}
\newtheorem{lem}[subsectionnum]{Lemma}
\newtheorem{prop}[subsectionnum]{Proposition}

\newtheorem{rem}[subsectionnum]{Remark}
\newtheorem{eg}[subsectionnum]{Example}
\newenvironment{acknowledge}{\paragraph*{Acknowledgements.}}{}
\newenvironment{pf-thm}[1][\proofname]{\noindent\textit}{\hfill$\qed$}


\begin{document}


\maketitle



\begin{prelims}


\def\abstractname{Abstract}
\abstract{We prove that the affine cone over a general primitively polarised K3 surface of genus $g$ is smoothable if and only if $g\le10$ or $g=12$. We also give several examples of singularities with special behaviour, such as surfaces whose affine cone is smoothable even though the projective cone is not.}

\keywords{Deformations; affine cones; K3 surfaces; Fano 3-folds}

\MSCclass{14B07; 14J28}

\vspace{0.25cm}

\languagesection{Fran\c{c}ais}{%

\textbf{Titre. Sur la lissabilit\'e des c\^ones sur les surfaces K3} \commentskip \textbf{R\'esum\'e.} Nous montrons que le c\^one affine sur une surface K3 primitivement polaris\'ee g\'en\'erale de genre $g$ est lissable si et seulement si $g\leq10$ ou $g=12$. Nous exhibons \'egalement plusieurs exemples de singularit\'es affichant des comportements sp\'ecifiques, tels que des surfaces dont le c\^one affine est lissable alors m\'eme que le c\^one projectif ne l'est pas.}

\end{prelims}


\newpage

\setcounter{tocdepth}{1} \tableofcontents

\section{Introduction}

\newsubsection{}
In this paper, we study deformations and smoothability of the affine cone over a polarized manifold.
See \ref{subsect:basicproperty} for basic notions.

The cone over a normal elliptic curve is smoothable if and only if the curve has degree $\le 9$ \cite{Pinkham}, the cone over a projectively normal abelian variety of dimension $\ge 2$ is never smoothable \cite{Schl}, and the cone over a curve of genus $\ge2$ embedded in degree at least $4g-3$ is not smoothable, if the curve is not hyperelliptic, trigonal, or a plane quintic cf.~\cite[\S 15]{Stevens} and references therein. Aside from the case of elliptic curves, in all of the above situations, the only deformations are again cones.


The cone over a K3 surface is a natural 3-dimensional generalisation of the cone over an elliptic curve; it is a normal, Gorenstein, isolated log canonical singularity. One of the main results of this present work is the following theorem.
\begin{thm}\label{thm!main} Let $S$ be a general K3 surface with primitive polarisation of genus $g$. Then the affine cone over $S$ is smoothable if and only if $g\le 10$ or $g=12$. Indeed, if $g=11$ or $g\ge13$ then the only deformations of the affine cone over $S$ are conical.
\end{thm}

A deformation of the affine cone is called \emph{conical} when the conclusion of Proposition \ref{prop!weighted-schlessinger-conditions}(ii) holds.
Cones over non-general K3 surfaces of genus $g=11$ or $g>12$ may still be smoothable (see \S\ref{sec!smoothing-fanos}).

\newsubsection{} The theorem is proved in Section \ref{sec!K3-cones}. The ``only if'' part follows from the vanishing of all graded parts of $T^1$ of the affine cone which have non-zero degree.
It is proved by using a deep theorem of Beauville \cite{Beauville} and its slight modification (see \ref{sec!vanishing-k-ge-1}). A weaker result can be derived from Green's conjecture, but with a precise condition on the polarization (see \ref{thm!keq2}). The ``if'' part is proved by sweeping out the cone because for $g\le10$ and $g=12$, the projective cone over $S$ deforms to a smooth Fano 3-fold, (see \ref{sec!sweeping}).


\newsubsection{}\label{sec!intro-projective-smoothings} 
Pinkham \cite{Pinkham2} gave an example of a $0$-dimensional variety whose affine cone is smoothable, even though the projective cone is not. The cone is Cohen--Macaulay but not Gorenstein or normal. In section 2, we prove the following:
\begin{thm}\label{thm!proj-cone} There exists a smooth, projectively normal surface $S$ such that the affine cone over $S$ is smoothable but the projective cone is not.
\end{thm}

The example is a particular surface of general type in its canonical model. We do not know of any example where $S$ is a K3 surface. In light of Pinkham's theorem on elliptic curves and Theorem \ref{thm!main} above, we ask:
\begin{quote} If $S$ is a K3 surface, is the affine cone over $S$ smoothable if and only if the
projective cone is
smoothable?\end{quote}
By \cite{CLM,CLM2}, the projective cone over a general K3 surface of Picard rank 1 is smoothable if and only if $g\le10$ or $g=12$.

\newsubsection{}\label{sec!intro-examples} 
We also construct K3 surfaces whose affine cone has several smoothing components:
\begin{thm}\label{thm!two-smoothings} There exist primitively polarised K3 surfaces $(S,L)$ of genus $7$, such that $C_a(S,L)$ has at least two topologically distinct smoothings.
\end{thm}

The proof is in \S\ref{sec!smoothing-fanos}, along with an analysis of cones over imprimitively embedded K3 surfaces, and cones over special K3 surfaces of large genus.

\newsubsection{}
Given a very ample line bundle $L$ on a smooth projective variety $V$ which induces a projectively normal embedding $V \hookrightarrow \P^N$,
we have the ``classical'' projective cone $C_p(V) \subset \P^{N+1}$ and  affine cone $C_a(V) \subset \A^{N+1}$.
Pinkham (\cite[Theorem 5.1]{Pinkham}) showed that, if the eigenspace $T^1_{C_a(V)}(k) =0$ for $k >0$, then the restriction homomorphism
 $\Hilb_{C_p(V) \subset \P^{N+1}} \rightarrow \Def_{C_a(V)}$ is formally smooth, where $\Hilb_{C_p(V) \subset \P^{N+1}}$ is the Hilbert functor and $\Def_{C_a(V)}$
 is the usual deformation functor.  Moreover, Schlessinger (\cite[\S 4.3]{Schl}) showed that, if $T^1_{C_a(V)}(k) =0$ for $k \neq 0$,
 then we can define a morphism $\Hilb_{V \subset \P^N} \rightarrow \Def_{C_a(V)}$ and it is formally smooth.

 In Section \ref{sec!weighted-homog}, we generalise these results to the case where $L$ is only assumed to be ample. This is probably known to the experts, but a proof has not been written down, so we give one in \ref{prop!weighted-schlessinger-conditions}. As an application, in \ref{cor:abeliancone}, we show:
 \begin{thm} The affine cone over any polarised abelian variety of dimension $\ge2$ has only conical deformations.
 \end{thm}

\newsubsection{}


We work over the complex numbers unless otherwise stated.

\begin{acknowledge} We thank Paul Hacking, Yoshinori Namikawa and Miles Reid for useful discussions. We also thank Angelo Lopez and Ciro Ciliberto for pointing out \cite{CLM,CLM2} after the first version of this article appeared, and also to Shigeru Mukai for helpful comments on the first version. Finally, we thank the referees for several helpful comments, corrections and advice. SC was supported by the DFG through grant Hu 337-6/2, and
ERC Advanced grant no.~340258, TADMICAMT. TS was supported by Max Planck Institute for Mathematics, JSPS Research Fellowships for Young Scientists and JST tenure track program.
\end{acknowledge}

\section{Affine and projective cones}

\newsubsection{Basic properties of cones}\label{subsect:basicproperty}

We use standard notation from deformation theory, see for example \cite{Hartshorne-deformation}, \cite{Sernesi}.
Let $X$ be an algebraic scheme. A \emph{deformation} of $X$ over a scheme $B=\Spec A$ of finite type with a closed point $0 \in B$  is a flat morphism $\pi\colon\mathcal{X}\to B$ together with a closed immersion $X \hookrightarrow \mathcal{X}$ which identifies $X$ with the closed fibre over $0$.
A deformation is called {\it infinitesimal} if $A$ is local Artinian.  We say that $X$ is \emph{smoothable} if
there exists a deformation $\pi\colon\mathcal{X}\to B$ of $X$ over an integral scheme $B$ of finite type
whose fibre $\mathcal{X}_b$ is smooth for general $b\in B$ (cf. \cite[\S 29]{Hartshorne-deformation}).

Let $(X,L)$ be a {\it polarised manifold}, that is, $X$ is a smooth projective variety such that $\dim X \ge 1$ and $L$ is an ample line bundle.
Let
\[
R(X,L):= \bigoplus_{k \ge 0} H^0(X, L^{\otimes k}).
\]
The {\it affine cone} over $(X,L)$ is $C_a(X,L):=\Spec R(X,L)$ and the {\it projective cone} over $(X,L)$ is $C_p(X,L):=\Spec R(X,L)[x]$, where $x$ has degree $1$.
By \cite[8.8.6]{EGA2}, $C_a(X,L)$ is normal.
We recall the following property.

\begin{prop}\label{prop:CMGoren}
Let $(X, L)$ be a polarised manifold such that $\dim X \ge 1$.
Then we have the following:
\begin{itemize}
\item[\rm (i)] The cone $C_a(X,L)$ is Cohen--Macaulay if and only if $H^i(X, L^{\otimes k}) =0$ for all $0<i< \dim X$ and $k \in \mathbb{Z} $.
\item[\rm (ii)] The cone $C_a(X,L)$ is Gorenstein if and only if it is Cohen--Macaulay and $\omega_X \simeq L^{\otimes m}$ for some $m \in \mathbb{Z}$.
\end{itemize}
\end{prop}

\begin{proof}
\noindent For (i), it is enough to check the conditions (a) and (b) in
\cite[5.1.6(ii)]{GW}.
We can check (a) by the construction of $C_a(X,L)$.
The condition (b) is nothing but our assumption. Part (ii) follows from \cite[5.1.9]{GW}.
\qed
\end{proof}

%

\newsubsection{A smoothable affine cone with a non-smoothable projectivization}


The following example proves Theorem \ref{thm!proj-cone}.

\begin{eg}\label{ex!non-proj-smoothing}{\rm
Let $S$ be a divisor of bidegree $(3,4)$ in $\P:=\P^1\times\P^2$. If $S$ has at worst ordinary double points, then $S$ is a regular surface of general type with $p_g=6$ and $K^2=11$. Indeed, by adjunction, $\omega_S=\Oh_S(1,1)$. From the standard short exact sequence $0\to \mathcal{I}_S\to \Oh_\P\to\Oh_S\to0$ and vanishing of $H^1(\Oh_\P(-2,-3))$, it follows that $p_g(S)=h^0(\Oh_{\P}(1,1))=6$. Similarly, $q(S)=h^1(\Oh_{\P})=0$ so $S$ is regular. Writing $H_1,H_2$ for the generators of $\Pic\P$, we compute $K_S^2=(3H_1+4H_2)(H_1+H_2)^2=11H_1H_2^2=11$.

The above discussion shows that the canonical model of $S$ is induced by the Segre embedding of $\P^1\times\P^2$ in $\P^5$. We next describe the defining equations of the canonical model. Let $s_1,s_2$, $t_1,t_2,t_3$ be the homogeneous coordinates on $\P^1\times\P^2$ and let $F\in H^0(\Oh_\P(3,4))$ be the defining equation of $S$. We choose $f_1,f_2,f_3\in H^0(\Oh_{\P}(3,3))$ such that $F=t_1f_1+t_2f_2+t_3f_3$. Then the coordinates giving the Segre embedding are $x_1=s_1t_1,x_2=s_1t_2,x_3=s_1t_3,x_4=s_2t_1,x_5=s_2t_2,x_6=s_2t_3$ and $f_1,f_2,f_3$ may be written as cubics in the $x_i$. The canonical model of $S$ in $\P^5$ is thus defined by the following five equations:
\begin{gather*}
\Pf_5= x_2x_4 -x_1x_5,\ \Pf_4= x_3x_4 -x_1x_6,\ \Pf_3= x_3x_5- x_2x_6,\\
\Pf_2=s_1F=x_1f_1+x_2f_2+x_3f_3,\ \Pf_1=s_2F=x_4f_1+x_5f_2+x_6f_3,
\end{gather*}
where $\Pf_2$, $\Pf_1$ are obtained by writing $s_1F$ (respectively $s_2F$) in terms of the $x_i$. According to the Buchsbaum--Eisenbud theorem on Gorenstein codimension 3 ideals, these equations may be written as $4\times4$ Pfaffians of the skew matrix $M$
of the form
\[
M=
\begin{pmatrix}
0 & 0 & x_1 & x_2 & x_3 \\
0 & 0 & x_4 & x_5 & x_6 \\
-x_1 & -x_4 & 0 & f_3 & -f_2 \\
 -x_2 & -x_5 &-f_3 & 0 & f_1 \\
 -x_3 & -x_6 & f_2 & -f_1 & 0
  \end{pmatrix},\]
where $\Pf_i$ is the Pfaffian of the skew symmetric matrix $M_i$ obtained from $M$
by deleting $i$-th row and column.
The first three equations define the Segre embedding, and the last two cut out the divisor $S$.

Let $X=C_a(S,K_S)\subset\A^6$ be the affine cone over the canonical model of $S$. Then by construction, $X$ is a Gorenstein normal 3-dimensional singularity. The equations defining $X$ are still the $4\times 4$ Pfaffians of $M$, and the coordinates on $\A^6$ are $x_1,\dots,x_6$.

All deformations of $X$ are obtained by varying the entries of $M$ \cite{KL,Waldi} or \cite[Theorem 9.7]{Hartshorne-deformation}. Thus after coordinate changes, the general fibre $X'$ of any deformation of $X$ is defined by the Pfaffians of
\[M'=
\begin{pmatrix}
0 & g & x_1 & x_2 & x_3 \\
-g & 0 & x_4 & x_5 & x_6 \\
-x_1 & -x_4 & 0 & f_3' & -f_2' \\
 -x_2 & -x_5 &-f_3' & 0 & f_1' \\
 -x_3 & -x_6 & f_2' & -f_1 '& 0
  \end{pmatrix},\]
where $f'_i=f_i+h_i$ for some polynomials $h_i$, and $g$ is an arbitrary polynomial.
Then the $4\times4$ Pfaffians of $M'$ are
\begin{gather*}
\Pf_5=g f_3'+x_2x_4-x_1x_5,\ \Pf_4=-g f_2'+ x_3x_4 -x_1x_6,\ \Pf_3=g f_1'+ x_3x_5 -x_2x_6 , \\
\Pf_2=x_1f_1' + x_2f_2' + x_3 f_3',\ \Pf_1=x_4 f_1' + x_5 f_2' + x_6 f_3'.
\end{gather*}

The smoothability of  $X$ is well known (cf.~\cite[Section 5]{KL}). Let $g$ be a nonzero constant, and choose $h_i$ sufficiently general with some terms of degree $\le1$. Since $g$ is constant, Pfaffians 1 and 2 are redundant, and $X'$ is a nonsingular complete intersection for suitably chosen $h_i$.

Now restrict to deformations $X'$ that are induced by a deformation of the projective cone $C_p(S, K_S)\subset\P^6$. Then $g\equiv0$ for degree reasons, and $h_i$ must have degree $\le3$ --- in particular, the above smoothing is not induced by $C_p(S, K_S)$. Since $g=0$, $X'$ passes through the origin, and a computation of the partial derivatives of Pfaffians 3, 4 and 5 shows that the Jacobian matrix of $X'$ must have rank $\le2$ there. Thus $X'$ is singular at the origin. As pointed out by the referee, an analysis of the tangent cone shows that, at best, the singularity of $X'$ is given by taking two hyperplane sections through the vertex of the cone over the Segre embedding of $\P^1\times\P^2$. These hyperplane sections are defined by some perturbations of $\Pf_1$ and $\Pf_2$ respectively.}
\end{eg}

\begin{rem}
{\rm For any $k\ge3$, we get 3-fold singularities with similar properties by taking a divisor $S_k$ in $\P^1\times\P^2$ of bidegree $(k,k+1)$.}
\end{rem}

\section{Proof of Theorem \ref{thm!main}}\label{sec!K3-cones}
In this section we prove
\begin{thm}\label{cor!neg} Let $S$ be a general K3 surface with primitive polarisation $L$ of genus $g$ and write $X=C_a(S,L)$. Then $T^1_X$ is concentrated in degree 0 if and only if $g=11$ or $g\ge 13$, and $X$ is smoothable if and only if $g\le10$ or $g=12$.
\end{thm}

Indeed, if $T^1_X(k)=0$ for all $k\ne0$, then $X$ has only conical deformations by Schlessinger \cite[Theorem 12.1]{Artin} (cf.~Proposition \ref{prop!weighted-schlessinger-conditions}).

\newsubsection{}\label{sec!sweeping} A Fano 3-fold with $b_2=1$ and genus $g$ exists when $2 \le g\le10$ or $g=12$ (cf.~\cite[\S4]{Mukai}). Then by \cite[Corollary 4.1]{Beauville}, a general primitively polarized K3 surface $(S, L)$ is obtained from $S \in |{-K_W}|$ for $W$ a Fano 3-fold with $b_2=1$
and $L= -K_W|_S$. 
Let $\sigma \in H^0(W, -K_W)$ be the defining section of $S$.
Then we may regard $C_a(S, L)$ as a divisor in $C_a(W, -K_W)$. 
Now let $\mathcal{X} \subset C_a(W, -K_W) \times \mathbb{A}^1$ be the zero locus of $\sigma + \lambda$,
where $\lambda$ is the parameter of the affine line $\mathbb{A}^1$.
This induces a smoothing $\mathcal{X} \rightarrow \mathbb{A}^1$ of $X$, which is called \emph{sweeping out the cone}.


\newsubsection{Computing graded pieces of $T^1_X$}\label{sec!computing-T1}
Let $(V,L)$ be a polarized manifold. By \cite{Schl,Pinkham}, the $\C^*$-action on $X=C_a(V,L)$ induces a grading on $T^1_X$, the space of isomorphism classes of first order infinitesimal deformations of $X$:
\[T^1_X=\bigoplus_{k\in\ZZ} T^1_X(k).\]
By \cite[Theorem 3.7]{Wahl-equising} we have
\[T^1_X(k)\subset H^1(V,\mathcal{E}_L\otimes L^{\otimes k}),\]
with equality when $H^1(V,L^{\otimes k})=0$ for all $k$ in $\ZZ$, where $\mathcal{E}_L$ is the extension
\begin{equation}\label{eqn:E_Ldefn}
0 \rightarrow \mathcal{O}_V \rightarrow \mathcal{E}_L
\rightarrow \mathcal{T}_V \rightarrow 0
\end{equation}
corresponding to $c_1(L) \in H^1(V, \Omega^1_V) \cong \Ext^1(\mathcal{T}_V, \mathcal{O}_V)$. When $V=S$ is a polarized K3 surface, $H^1(S,L^{\otimes k})=0$ for all $k$ in $\ZZ$, and so $T^1_X(k)\cong H^1(S,\mathcal{T}_S\otimes L^{\otimes k})$.
\newsubsection{Vanishing for $|k|\ge2$}\label{sec!vanishk2}
We recall the following criterion of Wahl for vanishing of $T^1(k)$:
\begin{thm} {\rm (Wahl \cite[Corollary 2.8]{Wahl})} \ Suppose the free resolution of $\Oh_S$ begins with
\begin{equation}\label{eqn!wahl}
\Oh_S\gets\Oh_{\P}\gets\Oh_{\P}(-2)^a\gets\Oh_{\P}(-3)^b\gets\dots.
\end{equation}
Then $T^1_{C_a(S)}(k)=0$ for $k\le -2$.
\end{thm}

\begin{thm}\label{thm!keq2} Let $S$ be a K3 surface with primitive polarisation $L$ of Clifford index $>2$. Let $X$ be the affine cone over $(S, L)$, then $T^1_X(k)$ vanishes for $|k|\ge 2$.
\end{thm}
\begin{proof}
By \cite{SD}, we can choose $C\in|L|$ a nonsingular irreducible curve. Since $C$ is a hyperplane section of $S\subset\P^g$ and the coordinate ring of $S$ is Gorenstein,
the free resolution of $\Oh_S$ is inherited from that of $\Oh_C$.
According to Green's conjecture \cite{Green}, the resolution of $\Oh_C$ has the form required by Wahl's criterion if and only if $\Cliff C>2$.
Since Green's conjecture holds for canonical curves on any K3 surface by Voisin \cite{Voi2,Voi5} and Aprodu--Farkas \cite{AF}, the theorem is proved.
\qed
\end{proof}

\newsubsection{Vanishing for $|k|=1$}\label{sec!vanishing-k-ge-1}

We recall the following theorem of Beauville and Mori--Mukai.

\begin{thm} {\rm (cf.~Beauville {\cite[\S 5.2]{Beauville}}, Mukai {\cite[\S 4]{Mukai}})}
\label{thm!vanishing} \ Let $S$ be a general K3 surface with primitive polarisation
$L$ of genus $g=11$ or $g\ge13$. Then $H^1(S,\Omega^1_S\otimes L)=0$.
\end{thm}

Since $T^1_X(-1)\cong H^1(S,\Omega_S^1\otimes L)$ and $T^1(k)\cong T^1(-k)$ because $S$ is a K3 surface, we have
the required vanishing.

We briefly explain the proof of Theorem \ref{thm!vanishing}.
Let $\mathcal{P}_g$ be
the moduli stack of pairs $(S,C)$, where $(S,L)$ is a primitively polarized K3 surface with $c_1(L)^2 = 2g-2$,
and $C$ is a stable curve in $|L|$.
Beauville \cite[(5.1)]{Beauville} shows that the vanishing
in Theorem \ref{thm!vanishing} is equivalent to generic finiteness of the forgetful morphism of
smooth irreducible Deligne--Mumford stacks
$\varphi_{g} \colon \mathcal{P}_{g} \rightarrow \overline{\mathcal{M}}_{g}$ defined by $(S,C)\mapsto C$. 
Mori and Mukai \cite{MM2}, \cite[Theorem 7]{Mukai} prove that $\varphi_g$ is generically finite
when $g=11$ and $g\ge 13$ by constructing explicit pairs $(S,C)$ for which the
fibre $\varphi_g^{-1}(C)$ is finite.

\section{Cones over some special K3 surfaces}\label{sec!smoothing-fanos}

In this section we examine the behaviour of $C_a(S,L)$ in some situations where $S$ is a non-general K3 surface.

\newsubsection{Cones over imprimitively polarized K3 surfaces}\label{sec!imprimitive}

{\ }

\begin{prop} Let $(S,L)$ be a general primitively polarized K3 surface of genus $g$ and fix an integer $n>1$. Then $X=C_a(S,L^{\otimes n})$ is smoothable if and only if one of the following holds:
$2\le g\le 6$ and $n=2$, or $g=3,4$ and $n=3$, or $(g,n)=(3,4)$.
\end{prop}
\begin{proof}
First note that $T^1_X(k)\cong T^1_Y(kn)$, where $Y=C_a(S,L)$. Since the Clifford index of $L$ is $\le\lfloor\frac{g-1}2\rfloor$, it follows from Green's conjecture and explicit computations for $g\le6$, that $T^1_X(k)$ vanishes for all $k\ne0$ when $(g,n)$ lies outside the stated values. For the converse, if $(g,n)\ne(3,3)$ then the smoothing is given by sweeping out the cone in the Fano 3-fold $(W,-K_W)$ with $-K_W=nA$, chosen so that $S\in|{-K_W}|$ and $A|_S=L$. In the special case $(g,n)=(3,3)$, the 3-fold $W_4\subset\P(1,1,1,1,3)$ inducing the smoothing of $X$ has a quotient singularity.
\qed
\end{proof}

\newsubsection{The cone over a K3 surface with $g=11$ or $g\ge13$ can nevertheless be smoothable}
If $W$ is a Fano 3-fold and $S$ is an anticanonical section of $W$ with polarization $\Oh_S(1):= -K_W|_S$,
then $C_a(S,\Oh_S(1))$ is smoothable.
From the Mori--Mukai classification \cite{MM} of Fano 3-folds with $b_2\ge2$, we see that such $S,W$ exist for $g=11$, $13\le g\le 29$ and $g=32$.
The case $g=33$ also occurs (see \ref{sec!imprimitive} with $(g,n)=(3,4)$). If $g>33$, then any smoothing of $C_a(S,\Oh_S(1))$ does not lift to the projective cone $C_p(S,\Oh_S(1))$. In spite of Example \ref{ex!non-proj-smoothing}, we expect that $C_a(S,\Oh_S(1))$ is not smoothable for \emph{any} $S$ of genus $>33$.


\newsubsection{K3 surfaces whose affine cone has at least two distinct smoothings}\label{sec!two-smoothings}

In this section, we prove Theorem \ref{thm!two-smoothings}. First recall the following example:

\begin{eg}\label{rem!dp6} {\rm The degree 6 del Pezzo surface $Y$ is a hyperplane section of $V_1=V\colon(1,1)\subset\P^2\times\P^2$ and $V_2=\P^1\times\P^1\times\P^1$. Thus $C_a(Y,{-K_Y})$ has two distinct smoothings. }
\end{eg}

Inspired by this, we found the following example:

\begin{eg}\label{ex!K3-double-dp6}{\rm
Let $\pi\colon S\to Y$ a double cover of the degree 6 del Pezzo surface $Y$, branched in $B\in|{-2K_Y}|$.
Let $L:= \pi^*(-K_Y)$ so that $(S, L)$ is a primitively polarised K3 surface of degree $12$  in $\P^7$.
By Example \ref{rem!dp6}, $Y=V_i\cap H_i$ for some $H_i\in|{-\frac12K_{V_i}}|$. Take $\pi_i\colon W_i\to V_i$ a double cover branched in $X_i\in|{-K_{V_i}}|$, where $X_i$ are chosen so that $X_i\cap H_i=B$ since $H^0(V_i, -K_{V_i}) \rightarrow H^0(Y, -2K_Y)$ is surjective. The $W_i$ are Fano 3-folds with distinct topology. Indeed, $W_1$ (respectively $W_2$) is number 2.6b (resp.~3.1) of the classification \cite{MM}. Moreover,
$W_i\cap\pi_i^*H_i=S$, so the affine cone $C_a(S,\Oh_S(1))$ is a hyperplane section of $C_a(W_i,-K_{W_i})\subset\A^{9}$ for each $i$, and so $C_a(S,\Oh_S(1))$ has two topologically distinct smoothings.}
\end{eg}

\section{On quasihomogeneous cones}\label{sec!weighted-homog}

Let $X$ be a projective manifold polarised by an ample line bundle $L$. We generalise Pinkham and Schlessinger's criteria on $T^1(k)$ \cite[Theorem 12.1]{Artin}, 
to the case where $L$ is not necessarily very ample. Choose generators $x_1,\dots,x_n$ of degrees $w_1,\dots,w_n$ for $R(X,L)=\bigoplus_{k\ge 0}H^0(X,L^{\otimes k})$ and let $\bar X=\Proj R(X,L)$ be the image of $X$ in weighted projective space $\P(w_1,\dots,w_n)$.

\begin{lem} The image $\bar X$ in $\P(w_1,\dots,w_n)$ is nonsingular and avoids the singular locus of $\P(w_1,\dots,w_n)$.
\end{lem}

\begin{proof} The isomorphism $X \simeq \Proj R(X,L)$ is elementary and $X$ is embedded into
$\P(w_1, \ldots, w_n)$ by the surjection $\C [z_1, \ldots, z_n] \rightarrow R(X, L)$ sending
$z_i$ to $x_i$ for $i=1, \ldots, n$.

Assume that $X \cap \Sing \P \neq \emptyset$. Then there exists $I:= \{ i_1, \ldots, i_l \} \subset \{1, \ldots, n \}$ such that $w_I:= \gcd (w_{i_1}, \ldots, w_{i_l})$ $>1$ and the corresponding stratum $\Pi_I\subset\Sing\P$ of index $w_I$
satisfies $X \cap \Pi_I \neq \emptyset$.
Let $m >0$ be a sufficiently large integer such that $L^{\otimes m}$ is very ample and
$\gcd(m, w_I) =1$. Since we have a surjection $\C [z_1, \ldots, z_n] \rightarrow R(X,L)$ and
it induces a surjection $H^0(\P, \Oh_{\P}(m)) \rightarrow H^0(X, L^{\otimes m})$,
we obtain $V(s) \supset \Pi_I \cap X$ for nonzero $s \in H^0(X, L^{\otimes m})$.
This contradicts the base point freeness of $|L^{\otimes m}|$.
\qed
\end{proof}



Let $\Art_{\C}$ be \,the \,category \,of \,Artinian \,local $\C$-algebras \,with \,residue \,field $\C$. \,
We \,denote \,by $\Hilb^w_Y\colon {\Art_{\mathbb{C}}} \rightarrow (\Sets)$, the weighted Hilbert functor parametrizing embedded deformations of $Y\hookrightarrow \P^n(w)$  in the weighted projective space $\P^n(w)$.

\begin{prop}\label{prop!weighted-schlessinger-conditions}
Let $X$ be a projective manifold, $L$ an ample line bundle on $X$ and $X \hookrightarrow \P(w_1,\dots,w_n)$ be the embedding
determined by generators $x_1, \ldots, x_n \in R(X, L)$.
\begin{itemize}
\item[\rm (i)] (Negative graded case) Suppose that $T^1_{C_a}(k)=0$ for all $k > 0$.
Then the restriction map
\[
\Phi\colon\Hilb^w_{C_p(X, L)} \rightarrow \Def_{C_a(X,L)}
\]
is formally smooth.
\item[\rm (ii)]  (Conical deformations)
Suppose that $T^1_{C_a}(k)=0$ for all $k \neq 0$.
Then we have a canonical morphism of functors
\[
\Psi\colon\Hilb^w_{X} \rightarrow \Def_{C_a(X,L)}
\]
and it is formally smooth, that is,  $C_a(X,L)$ has only conical deformations.
\end{itemize}
\end{prop}


A weaker version of part (i) can be extracted from \cite{Pinkham}: the restriction map $\Phi\colon\Hilb^w_{C_p(X, L)} \rightarrow \Def_{C_a(X,L)}$ has a section. Indeed, Pinkham and Schlessinger \cite[Proposition 2.3]{Pinkham} showed that a quasihomogeneous cone has a versal deformation,
and a small modification of the argument used in \cite[Theorem 4.2]{Pinkham} shows the claim. 

\begin{proof}
We generalise the approach of \cite[Theorem 12.1]{Artin} to the weighted setting. For part (i), we need to show that the following two properties hold:

\begin{enumerate}
\item[(1)] $d\Phi\colon\Hilb^w_{C_p}(k[\epsilon])\to \Def_{C_a}(k[\epsilon])$ is surjective.
\item[(2)] Define $\xi_A:= C_{p,A} \in \Hilb^w_{C_p}(A)$. Let $\bar{\xi}_A:= \Phi (\xi_A) \in \Def_{C_a}(A)$
be its image and assume that $\bar{\xi}_A$ can be lifted over a small extension $A' \in \Art_{\mathbb{C}}$
of $A$. Then there exists a lift $\xi_{A'} \in \Hilb^w_{C_p}(A')$ of $\xi_A$ over $A'$.
\end{enumerate}

We prove (1). Let $C_p\subset\P(1,w_1,\dots,w_n)$ denote the projective cone over $X$ and let $\pi_p\colon C'_p\to X$ be the $\C$-bundle over $X$ arising from the punctured projective cone. Since $C_p$ is normal at the vertex, we have
\begin{gather*}
\Hilb^w_{C_p}(k[\epsilon])=H^0(N_{C_p/\P(1,w)})=H^0(C'_p,\pi_p^*N_{X/\P(w)})=
H^0(X,{\pi_p}_*\pi_p^* N_{X/\P(w)})\\=\bigoplus_{j\ge0} H^0(X,N_{X/\P(w)}\otimes L^{\otimes -j}).
\end{gather*}
Now, $\Def_{C_a}(k[\epsilon])=T^1_{C_a}$, and according to \cite[Theorem 3.7]{Wahl-equising}, the graded pieces of $T^1_{C_a}$ are
\[T^1_{C_a}(k)=\coker\Big(H^0(X,\bigoplus_{i=1}^n L^{\otimes (k+w_i)})\to H^0(X,Q\otimes L^{\otimes k})\Big)\]
where $Q$ is the cokernel of $\mathcal{E}_L\to\bigoplus_i L^{\otimes {w_i}}$. Now, $Q$ is simply the normal
bundle to $X$ in $\P(w_1,\dots,w_n)$.
Since $T_{C_a}^1(k)=0$ for $k>0$, we see that
\[d\Phi\colon\bigoplus_{k\ge 0}H^0(N_{X/\P(w)}\otimes L^{\otimes-k})\to T^1_{C_a}=\bigoplus_{k\ge 0}T^1_{C_a}(-k)\]
is surjective.

Next we prove (2). The obstruction to lifting $\bar{\xi}_{A'}$ to $\xi_{A'}$ lives in $H^1(C_p,N_{C_p/\P(1,w)})$. As before, since $C_p$ is normal at the vertex, we have an inclusion $H^1(C_p,N_{C_p/\P(1,w)})\subset \bigoplus_{k\ge 0} H^1(X,N_{X/\P(w)}\otimes L^{\otimes -k})$. Thus we have an inclusion
\[H^1(C_p,N_{C_p/\P(1,w)})\subset H^1(C'_a,N_{C'_a})=\bigoplus_{k=-\infty}^{\infty} H^1(X,N_{X/\P(w)}\otimes L^{\otimes k}),\]
where $C_a'$ denotes the punctured affine cone, which is a $\C^*$-bundle over $X$. By assumption, $\bar \xi_A|_{C'_a}$ lifts to $\bar \xi_{A'}|_{C'_a}$ so the image of this obstruction in $H^1(C'_a,N_{C'_a})$ vanishes. This implies the existence of $\xi_{A'}$ in $\Hilb^w_{C_p}(A')$ lifting $\xi_A$.

Proof of part (ii). We define the canonical functor $\Psi\colon\Hilb^w_X\to \Def_{C_a}$ as follows: let $\xi_A=(X_A\subset\P_A(w))$ in $\Hilb^w_{C_p}(A)$ be an embedded deformation of $X\subset\P(w)$. Define $R_A=A[\tilde x_1,\dots,\tilde x_n]/I_{X_A}$, where generators $\tilde x_i$ are chosen so that $\P_A(w)=\Proj_A A[\tilde x_1,\dots,\tilde x_n]$, and $I_{X_A}$ is the ideal defining $X_A\subset\P_A(w)$, i.e.~$X_A=\Proj_A R_A$. By construction, $R_A$ is a flat deformation of $R(X,L)$. Indeed, the generators $\tilde x_i$ clearly extend $x_i$ because $X_A$ is embedded in $\P_A(w)$, and the relations and syzygies of $R_A$ lift those of $R(X,L)$ because $X_A$ is flat. Thus we define $\Psi(\xi_A)=\bar{\xi}_A$ in $\Def_{C_a}(A)$ by $\Spec_A R_A$. The central fibre is $\Spec R(X,L)$ and $\bar{\xi}_A$ is automatically flat. Moreover, $\Psi$ is functorial because $\Hilb^w_X$ is.

Formal\,smoothness\,is\,proved\,in\,a\,similar\,way\,to\,part\,(i). This\,time\,we\,have $\Hilb^w_X(k[\epsilon])=H^0(X,N_{X/\P(w)})$
and by assumption, $T^1_{C_a}=T^1_{C_a}(0)$ so we again have a surjection $d\Psi\colon\Hilb^w_X(k[\epsilon])\to\Def_{C_a}(k[\epsilon])$.

Let $\bar\xi_{A'}$ in $\Def_{C_a}(A')$ be an extension of $\Psi(\xi_A)=\bar\xi_A$. The obstruction to lifting $\bar\xi_{A'}$ to $\Hilb^w_X(A')$ lies in $H^1(N_{X/\P(w)})$. Now, $H^1(N_{X/\P(w)})\subset H^1(C'_a,N_{C'_a})$ and we know that $\bar\xi_A|_{C'_a}$ lifts to $\bar\xi_{A'}|_{C'_a}$ by assumption. Thus the obstruction vanishes, and $\xi_{A'}$ in $\Hilb^w_X(A')$ exists.
\qed
\end{proof}

We think that the following result is known to experts, but we could not find it in
the literature. We prove it as an application of Proposition \ref{prop!weighted-schlessinger-conditions}.

\begin{cor}\label{cor:abeliancone}
Let $X$ be an abelian variety of dimension $n \ge 2$ and $L$ an ample line bundle on $X$.
Then the affine cone $C_a=C_a(X,L)$ has only conical deformations.
\end{cor}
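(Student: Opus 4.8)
The plan is to apply Proposition \ref{prop:conicalH1Lk0}(ii) to $(X,L)$, so the task reduces to two verifications: first, that $H^1(X,kL)=0$ for all $k>0$, and second, that $T^1_{C_a}(k)=0$ for all $k\ne0$. The first is immediate: for an ample line bundle $L$ on an abelian variety $X$, Mumford's index theorem (or simply the Kodaira--Nakano vanishing on an abelian variety together with $\omega_X\simeq\Oh_X$) gives $H^i(X,kL)=0$ for all $i>0$ and all $k>0$. So the content is the vanishing of the negative and positive graded pieces of $T^1_{C_a}$.

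For the positive pieces, I would use the injection $T^1_{C_a}(k)\hookrightarrow T^1_{C_a'}(k)\simeq H^1(X,\mathcal{E}_{L^\vee}(kL))$ valid since $C_a$ is normal, exactly as in the proof of Corollary \ref{cor:ellipticcone}. Applying $H^\bullet(X,{-}\otimes\Oh_X(kL))$ to the extension \eqref{eqn:E_Ldefn} for $\mathcal{E}_{L^\vee}$, we get an exact sequence $H^1(X,kL)\to H^1(X,\mathcal{E}_{L^\vee}(kL))\to H^1(X,\T_X(kL))$. The left term vanishes for $k>0$ by the first step. Since $X$ is an abelian variety, $\T_X\simeq\Oh_X^{\oplus n}$ is trivial, so $H^1(X,\T_X(kL))\simeq H^1(X,kL)^{\oplus n}=0$ for $k>0$ as well. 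Hence $T^1_{C_a'}(k)=0$ and so $T^1_{C_a}(k)=0$ for all $k>0$.

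The remaining step is the vanishing of $T^1_{C_a}(k)$ for $k<0$; here one cannot simply invoke a Kodaira-type vanishing, and this is the main obstacle. I would again pass to $T^1_{C_a}(k)\hookrightarrow H^1(X,\mathcal{E}_{L^\vee}(kL))$ and use the same extension sequence, which now reads $H^1(X,kL)\to H^1(X,\mathcal{E}_{L^\vee}(kL))\to H^1(X,\T_X(kL))\to H^2(X,kL)$. For $k<0$ we have $H^i(X,kL)=0$ for $i<n$ (again by the index theorem: a negative ample bundle has cohomology only in top degree $n$), and since $n\ge2$ both $H^1(X,kL)$ and $H^2(X,kL)$ vanish; using $\T_X$ trivial once more, $H^1(X,\T_X(kL))\simeq H^1(X,kL)^{\oplus n}=0$. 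Therefore $H^1(X,\mathcal{E}_{L^\vee}(kL))=0$ for all $k<0$, giving $T^1_{C_a}(k)=0$ for $k<0$. This is precisely where the hypothesis $n\ge2$ is used — for $n=1$ (elliptic curves) the groups $H^1(X,kL)$ for $k<0$ are nonzero and the conclusion genuinely fails.

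Combining the three steps, $T^1_{C_a}(k)=0$ for all $k\ne0$, so Proposition \ref{prop:conicalH1Lk0}(ii) applies and $C_a(X,L)$ has only conical deformations. (One should note this also recovers, via Schlessinger's criterion, that $C_a(X,L)$ is not smoothable: a conical deformation has singular total space along the vertex locus, so no nearby fibre can be smooth unless $T^1$ itself vanishes, which it does not here since $T^1_{C_a}(0)\simeq H^1(X,\mathcal{E}_L)\supseteq H^1(X,\Oh_X)\ne0$.)
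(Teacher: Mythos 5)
Your proposal is correct and follows essentially the same route as the paper: inject $T^1_{C_a}(k)$ into $T^1_{C_a'}(k)\simeq H^1(X,\mathcal{E}_L(kL))$ (the paper notes $\mathcal{E}_L\simeq\mathcal{E}_{L^\vee}$), kill this group for $k\neq 0$ using triviality of $\T_X$ together with vanishing of $H^1(X,kL)$ (the paper cites Serre duality, Kodaira vanishing and $n\ge 2$, which is the same vanishing you derive from the index theorem), and then apply Proposition \ref{prop:conicalH1Lk0}(ii). Your explicit check of the hypothesis $H^1(X,kL)=0$ for $k>0$ is a detail the paper leaves implicit, but the argument is the same.
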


\begin{proof}
Since $\T_X \simeq \mathcal{O}_X^{\oplus n}$, we have $H^1(X, \mathcal{T}_X \otimes L^{\otimes k}) =0$ because $H^1(X, L^{\otimes k}) =0$ for any $k \neq 0$
by Serre duality, Kodaira vanishing and $n \ge 2$.
Hence $H^1(X, \mathcal{E}_L \otimes L^{\otimes k})=0$ and thus $T^1_{C_a}(k) =0$ for $k \neq 0$ by \ref{sec!computing-T1}.
Now apply Proposition \ref{prop!weighted-schlessinger-conditions}(ii).
\qed
\end{proof}
Indeed, embedded deformations of the projective cone over an abelian variety
were shown to be conical in \cite{sommese}. A recent preprint \cite[Cor.~8.7]{KK}
contains a proof of Corollary \ref{cor:abeliancone} which works in positive characteristic.

\newsubsection{Quasismooth K3 surfaces} It would be very interesting to generalise Theorem \ref{thm!main} to the case of affine cones over quasismooth K3 surfaces embedded in weighted projective space. Some applications of this are worked out in \cite{K3-transitions}, and we have already made some progress in this direction with Proposition \ref{prop!weighted-schlessinger-conditions}. We believe it is possible to further extend the Proposition to the quasismooth case. This motivates future work.

\providecommand{\bysame}{\leavevmode\hbox to3em{\hrulefill}\thinspace}
%
%

\bibliographystyle{amsalpha}
\bibliographymark{References}
\def\cprime{$'$}

\end{document}